\newcommand{\Z}{{\mathbb Z}}
\begin{document}
\bibliographystyle{plain}
\newtheorem{thm}{Theorem}
\newtheorem{lem}{Lemma}[section]
\newtheorem{prop}[lem]{Proposition}
\newtheorem{alg}[lem]{Algorithm}
\newtheorem{cor}[lem]{Corollary}
\newtheorem*{conj}{Conjecture}

\theoremstyle{definition}

\newtheorem{ex}[thm]{Example}

\theoremstyle{remark}

\newtheorem{ack}{Acknowledgement}

\newtheorem*{rem}{Remark}
\newtheorem*{acknowledgement}{Acknowledgement}
\title[]{Markoff-Rosenberger triples and generalized Lucas sequences}

\author{H.\ R.\ Hashim}
\address{
Institute of Mathematics\\
University of Debrecen\\
P.O.Box 400\\
4002 Debrecen\\
Hungary}

\email{hashim.hayder.raheem@science.unideb.hu}

\author{L. Szalay}
\address{
Department of Mathematics\\
J.~Selye University\\ 
Hradna ul.~21, 94501 Komarno\\ 
Slovakia and
Institute of Mathematics\\
University of Sopron\\
H-9400, Sopron, Bajcsy-Zsilinszky utca 4.\\
Hungary\\
}

\email{szalay.laszlo@uni-sopron.hu}

\author{Sz.\ Tengely}
\address{
Institute of Mathematics\\
University of Debrecen\\
P.O.Box 400\\
4002 Debrecen\\
Hungary}

\email{tengely@science.unideb.hu}

\date{\today}
\thanks{}

\keywords{}
\subjclass[2000]{Primary 11D45, Secondary 11B39}

\begin{abstract}
We consider the Markoff-Rosenberger equation $$ax^2+by^2+cz^2=dxyz$$ with $(x,y,z)=(U_i,U_j,U_k),$ where $U_i$ denotes the $i$-th generalized Lucas number of first/second kind. We provide upper bound for the minimum of the indices and we apply the result to completely resolve concrete equations, e.g. we determine solutions containing only balancing numbers and Jacobsthal numbers, respectively.
\end{abstract}
\maketitle

\section{Introduction}

Markoff \cite{M1} showed that the equation
$$
x^2+y^2+z^2=3xyz
$$
has infinitely many integral solutions. The equation defined above is called Markoff equation, and it has been generalized in many directions by several authors. In this article, we deal with the generalization 
\begin{equation}\label{MR}
ax^2+by^2+cz^2=dxyz
\end{equation}
considered by Rosenberger \cite{RoseMar}. Rosenberger proved that if $a,b,c,d\in\mathbb{N}$ are integers such that $\gcd(a,b)=\gcd(a,c)=\gcd(b,c)=1$ and $a,b,c|d,$ then non-trivial solutions exist only if $(a,b,c,d)\in A,$ where
$$
A=\{(1,1,1,1),(1,1,1,3),(1,1,2,2),(1,1,2,4),(1,2,3,6),(1,1,5,5)\}.
$$
Luca and Srinivasan \cite{LucaMark} proved that the only solution of Markoff equation with $x\leq y\leq z$ such that $(x,y,z)=(F_i,F_j,F_k)$ is given by the well-known identity related to the Fibonacci numbers
$$
1+F_{2n-1}^2+F_{2n+1}^2=3F_{2n-1}F_{2n+1}.
$$
Kafle, Srinivasan and Togbé \cite{KaSrTo} determined all triples of Pell numbers $(x,y,z)=(P_i,P_j,P_k)$ satisfying the Markoff equation $x^2+y^2+z^2=3xyz.$ Here there is an other identity given by
$$2^2 + P_{2m-1}^2 + P_{2m+1}^2 = 3\cdot 2\cdot P_{2m-1}P_{2m+1}.$$

Markoff-Rosenberger triples containing only Fibonacci numbers were determined by Tengely \cite{TenMR}. Altassan and Luca \cite{AltLuc} considered Markoff-Rosenberger equations with integer solutions $(x, y, z)$ which are all members of a Lucas sequence whose characteristic equation has roots which are quadratic units. In this article, we consider generalized Lucas number solutions of the Markoff-Rosenberger equation. We define the sequences $U_n,V_n$ as follows
\begin{eqnarray*}
U_0(P,Q)=0,U_1(P,Q)=1,&&\quad U_{n}(P,Q)=PU_{n-1}(P,Q)-QU_{n-2}(P,Q),\\
V_0(P,Q)=2,V_1(P,Q)=P,&&\quad V_{n}(P,Q)=PV_{n-1}(P,Q)-QV_{n-2}(P,Q),
\end{eqnarray*}
where neither $P$ nor $Q$ is zero.
\begin{rem}
Assume that $P^\star=-P$, and define
$$U^\star_0=0, \; U^\star_1=1\; U^\star_n=P^\star U^\star_{n-1}-QU^\star_{n-2},$$
$$V^\star_0=2, \; V^\star_1=P^\star, \; V^\star_n=P^\star V^\star_{n-1}-QV^\star_{n-2}.$$
Then we have
$$
U^\star_n=(-1)^{n+1}U_n,\quad V^\star_n=(-1)^nV_n.
$$
Based on the above identities in this paper we only deal with sequences satisfying $P>0.$ 
\end{rem}
In this paper we assume that $0<D=P^2-4Q,P\geq 2$ and $-P-1\leq Q\leq P-1.$ We excluded the cases with $P=1$ to make the presentation simpler. However if $P=1,$ then $-2\leq Q\leq 0.$ Therefore there are only two sequences to be considered. Namely the Fibonacci sequence with $(P,Q)=(1,-1)$ and the Jacobsthal sequence with $(P,Q)=(1,-2).$ The former one was completely solved in \cite{TenMR}, the latter one will be handled separately in this paper. 
The characteristic polynomial associated to the above sequences is given by $x^2-Px+Q.$ The roots of the  characteristic polynomial can be written in the form 
$$
\alpha=\frac{P+\sqrt{D}}{2},\quad \beta=\frac{P-\sqrt{D}}{2}
$$
and we have $\alpha-\beta=\sqrt{D}, \alpha\beta=Q.$ We also assume that $\alpha/\beta$ is not a root of unity. By Binet formulas we have that 
$$
U_n=\frac{\alpha^n-\beta^n}{\alpha-\beta},\quad V_n=\alpha^n+\beta^n.
$$
We assume that there exist constants $s_1,s_2,t_1,t_2,i_1,i_2$ and $j_1,j_2$ such that 
\begin{eqnarray}
s_1\alpha^{k-i_1}\leq & U_k & \leq s_2\alpha^{k+i_2},\label{Us}\\ 
t_1\alpha^{k-j_1}\leq & V_k & \leq t_2\alpha^{k+j_2}\quad\mbox{ for $k\geq 1,$}\label{Vt}
\end{eqnarray}
and these will be fulfilled in the cases that we investigate in this paper.

\section{Main result}
\begin{thm} \label{th111}
Let $(a,b,c,d)\in A$ and 
$$
B_0=\min_{I\in\mathbb{Z}}\left|\alpha^I-\frac{d}{c\sqrt{D}}\right|,\quad B_1=\min_{I\in\mathbb{Z}}\left|\alpha^I-\frac{d}{c}\right|.
$$
If $x=U_i, y=U_j$ and $z=U_k$ with $1\leq i\leq j\leq k$ is a solution of \eqref{MR} and $B_0\neq 0,$ then there exists an effectively computable constant $C_0$ such that $i\leq C_0.$ If $x=V_i, y=V_j$ and $z=V_k$ with $1\leq i\leq j\leq k$ is a solution of \eqref{MR} and $B_1\neq 0,$ then there exists an effectively computable constant $C_1$ such that $i\leq C_1.$ 

\end{thm}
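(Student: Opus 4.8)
The plan is to show that a solution with large smallest index $i$ forces the integer $I=k-i-j$ to make $\alpha^{I}$ lie extremely close to the target $d/(c\sqrt D)$ (first kind) or $d/c$ (second kind), whereas the hypothesis $B_0\neq 0$ (resp.\ $B_1\neq 0$) guarantees that no integer power of $\alpha$ can approach that target by less than $B_0$ (resp.\ $B_1$). Since $P\geq 2$ and $0<D=P^2-4Q$ give $\alpha>1$ and $\alpha>|\beta|$, hence $|\beta/\alpha|<1$, the approximation error will turn out to decay geometrically in $i$, and comparing it against the fixed positive gap $B_0$ (resp.\ $B_1$) yields the desired effective bound on $i$.

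I would treat the first-kind case first. Viewing \eqref{MR} with $(x,y,z)=(U_i,U_j,U_k)$ as linear in $U_k$ and dividing by $c\,U_iU_jU_k$ gives
\[
\frac{U_k}{U_iU_j}-\frac{d}{c}=-\frac{aU_i^2+bU_j^2}{c\,U_iU_jU_k}.
\]
Using $1\le i\le j\le k$, the positivity and monotonicity of the sequence, and the elementary consequences $U_k\le \tfrac{d}{c}U_iU_j$ and $U_k>\tfrac{d}{a+b+c}U_iU_j$ of \eqref{MR}, the right-hand side is bounded in absolute value by an effective multiple of $1/U_i^2$, and then by the lower bound in \eqref{Us} by $C'\alpha^{-2i}$ for an effective $C'$. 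On the other side, the Binet formula yields
\[
\frac{U_k}{U_iU_j}=\sqrt D\,\alpha^{I}\cdot\frac{1-(\beta/\alpha)^k}{\bigl(1-(\beta/\alpha)^i\bigr)\bigl(1-(\beta/\alpha)^j\bigr)}=\sqrt D\,\alpha^{I}\,(1+E),
\]
where, because $|\beta/\alpha|<1$ and $i\le j\le k$, one has $|E|\le C''|\beta/\alpha|^{i}$ with effective $C''$ once $i$ exceeds an effective threshold.

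Combining the two displays gives $\sqrt D\,\alpha^I(1+E)=\tfrac{d}{c}+O(\alpha^{-2i})$; the right-hand side is bounded and $|1+E|\ge\tfrac12$ for large $i$, so $\alpha^I$ is bounded by an effectively computable constant, and therefore
\[
\Bigl|\alpha^{I}-\frac{d}{c\sqrt D}\Bigr|\le\frac{1}{\sqrt D}\cdot\frac{aU_i^2+bU_j^2}{c\,U_iU_jU_k}+|\alpha^{I}|\,|E|\le C_2\,\rho^{\,i},
\]
with $\rho=\max\{\alpha^{-2},|\beta/\alpha|\}<1$ and $C_2$ effective. Since $I\in\mathbb Z$, the definition of $B_0$ forces $|\alpha^{I}-d/(c\sqrt D)|\ge B_0>0$, whence $B_0\le C_2\rho^{\,i}$ and $i\le \log(C_2/B_0)/\log(1/\rho)=:C_0$. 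The second-kind case runs identically, the only difference being that $V_n=\alpha^n+\beta^n$ carries no factor $\sqrt D$, so the same manipulation gives $|\alpha^{I}-d/c|\le C_3\rho^{\,i}$ and hence $i\le C_1$ via $B_1$; this is precisely why the target is $d/c$ rather than $d/(c\sqrt D)$.

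The main obstacle, beyond the bookkeeping required to keep every constant effective, is the step that controls $|\alpha^{I}|$: since one cannot assume a priori that $I$ ranges over a fixed interval, boundedness of $\alpha^{I}$ must first be extracted from the equation itself (via $|1+E|\ge\tfrac12$) before the geometric error estimate can be closed. Indices $i$ below the threshold where these approximations hold are handled trivially, as they are already bounded.
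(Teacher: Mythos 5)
Your argument is correct and follows essentially the same route as the paper: both isolate $\alpha^{k-i-j}$ via the Binet formulas, show it approximates $d/(c\sqrt{D})$ (resp.\ $d/c$) with an error that decays geometrically in $i$, and then conclude from $B_0\neq 0$ (resp.\ $B_1\neq 0$). The only cosmetic difference is that you divide by $cU_iU_jU_k$ and control a multiplicative error $1+E$ (which forces your extra step of bounding $|\alpha^{I}|$), whereas the paper expands $dU_iU_j$ additively and bounds the $\beta$-terms directly to reach the same inequality \eqref{ijk}.
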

\begin{proof} 
We note that the conditions $P\geq 2, D>0$ and $-P-1\leq Q\leq P-1$ imply that $\alpha>1$ and $|\beta|\leq 1.$ The first inequality is obvious, the second can be proved as follows. Since $P\geq 2$ and $-P-1\leq Q\leq P-1$ we have 
\begin{equation}\label{sopron}
(P-2)^2\leq P^2-4Q\leq (P+2)^2.
\end{equation}
Therefore $P-2\leq \sqrt{D}\leq P+2.$ We have that $\beta=\frac{P-\sqrt{D}}{2},$ hence $$-1\leq \beta\leq 1.$$
We look for solutions satisfying $x=U_i, y=U_j$ and $z=U_k$ with $1\leq i\leq j\leq k.$
We have that 
$$
\frac{c\alpha^k}{\sqrt{D}}-\frac{d}{D}\alpha^{i+j}=-\frac{aU_i^2+bU_j^2}{U_k}+\frac{c\beta^k}{\sqrt{D}}-\frac{d}{D}(\alpha^i\beta^j+\alpha^j\beta^i-\beta^{i+j}).
$$
We apply \eqref{Us} to get an upper bound for $\frac{aU_i^2+bU_j^2}{U_k}$ such that $aU_i^2+bU_j^2\le(a+b)U_j^2$ holds since the Lucas sequence $U_n$ is monotone increasing. We obtain that
$$
\frac{aU_i^2+bU_j^2}{U_k}\leq \frac{(a+b)U_j^2}{U_k}\leq (a+b)\frac{s_2^2}{s_1}\alpha^{2i_2+i_1}\alpha^j.
$$
Since $|\beta|\leq 1,$ we get that 
$$
\left|\frac{c\beta^k}{\sqrt{D}}\right|\leq \left|\frac{c}{\sqrt{D}}\right|\leq \left|\frac{c\alpha^j}{\sqrt{D}}\right|.
$$
The last expression to bound is $\frac{d}{D}(\alpha^i\beta^j+\alpha^j\beta^i-\beta^{i+j}),$ in this case we obtain that 
$$
\left|\frac{d}{D}(\alpha^i\beta^j+\alpha^j\beta^i-\beta^{i+j})\right|\leq \frac{d}{D}(2\alpha^j+1)
$$
hence we have that 
$$
\left|\frac{d}{D}(\alpha^i\beta^j+\alpha^j\beta^i-\beta^{i+j})\right|\leq \frac{3d}{D}\alpha^j.
$$
From the above inequalities we get
$$
\left|\frac{c\alpha^k}{\sqrt{D}}-\frac{d}{D}\alpha^{i+j}\right|\leq \left((a+b)\frac{s_2^2}{s_1}\alpha^{2i_2+i_1}+\frac{c}{\sqrt{D}}+\frac{3d}{D}\right)\alpha^j.
$$
It follows that 
\begin{equation}\label{ijk}
\left|\alpha^{k-i-j}-\frac{d}{c\sqrt{D}}\right|\leq \left((a+b)\frac{s_2^2}{s_1}\alpha^{2i_2+i_1}\frac{\sqrt{D}}{c}+\frac{3d}{c\sqrt{D}}+1\right)\alpha^{-i}.
\end{equation}
Let $$B_0=\min_{I\in\mathbb{Z}}\left|\alpha^I-\frac{d}{c\sqrt{D}}\right|.$$
If $B_0\neq 0,$ then we get an upper bound for $i$ from the inequality
\begin{equation}\label{boundi}
\alpha^i\leq \frac{1}{B_0}\left((a+b)\frac{s_2^2}{s_1}\alpha^{2i_2+i_1}\frac{\sqrt{D}}{c}+\frac{3d}{c\sqrt{D}}+1\right).
\end{equation}
In a similar way one can prove the second part of the statement, here we note that we get the inequalities (assuming $B_1\neq 0$)
\begin{eqnarray}
\left|\alpha^{k-i-j}-\frac{d}{c}\right|&\leq& \left((a+b)\frac{t_2^2}{t_1}\alpha^{2j_2+j_1}\frac{1}{c}+\frac{3d}{c}+1\right)\alpha^{-i}.\label{Vijk}\\ 
\alpha^i&\leq& \frac{1}{B_1}\left((a+b)\frac{t_2^2}{t_1}\alpha^{2j_2+j_1}\frac{1}{c}+\frac{3d}{c}+1\right).\label{Vboundi}
\end{eqnarray}
\end{proof}
We can also classify the cases satisfying $B_0=0$, or $B_1=0$, the results are as follows.
\begin{thm}\label{B0}
If $P\ge2$, then $B_0\ne0$ fulfils unless
\begin{itemize}
	\item $P=3,Q=2$, $\alpha=2$, $\sqrt{D}=1$, $e=1$, $I=0$,
	\item $P=3,Q=2$, $\alpha=2$, $\sqrt{D}=1$, $e=2$, $I=1$,
	\item $P=4,Q=3$, $\alpha=3$, $\sqrt{D}=2$, $e=2$, $I=0$,
	\item $P=5,Q=4$, $\alpha=4$, $\sqrt{D}=3$, $e=3$, $I=0$,
\end{itemize}
where $e=d/c$.
\end{thm}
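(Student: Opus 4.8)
The plan is to characterize precisely when the quantity $B_0=\min_{I\in\Z}\left|\alpha^I-e/\sqrt{D}\right|$ vanishes, where $e=d/c$. Running through the list $A$, the ratio $e=d/c$ takes only the values $1,2,3$, so throughout one may assume $e\in\{1,2,3\}$. Clearly $B_0=0$ if and only if there is an integer $I$ with $\alpha^I=e/\sqrt{D}$, equivalently $\alpha^I\sqrt{D}=e$. The starting point is the Binet-type identity $2\alpha^I=V_I+U_I\sqrt{D}$, which holds for every $I\in\Z$ since it is read off directly from $V_I=\alpha^I+\beta^I$ and $U_I\sqrt{D}=\alpha^I-\beta^I$; for negative indices one uses $V_{-I}=V_I/Q^{I}$, a consequence of $\alpha\beta=Q$.

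The first and decisive step is to show that $B_0=0$ forces $D$ to be a perfect square. Suppose $D$ is not a perfect square, so $\{1,\sqrt{D}\}$ is a $\Q$-basis of $\Q(\sqrt{D})$, and write $\alpha^I=(V_I+U_I\sqrt{D})/2$ with $V_I,U_I\in\Q$. If $\alpha^I=e/\sqrt{D}=(e/D)\sqrt{D}$, then comparing the rational parts gives $V_I/2=0$, i.e. $V_I=0$. But, as established in the proof of Theorem~\ref{th111}, $\alpha>1\geq|\beta|$, so for $I\geq 1$ we have $V_I=\alpha^I+\beta^I\geq\alpha^I-|\beta|^I>0$, while $V_0=2$; together with $V_{-I}=V_I/Q^{I}$ this yields $V_I\neq 0$ for all $I\in\Z$, a contradiction. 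Hence $D=m^2$ for some integer $m\geq 1$. Since $m^2-P^2=-4Q\equiv 0\pmod 4$ forces $m\equiv P\pmod 2$, the number $\alpha=(P+m)/2$ is a positive integer, and $\alpha>1$ gives $\alpha\geq 2$; moreover $m=2\alpha-P$ and $\beta=P-\alpha\in\Z$.

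With $D=m^2$ the condition becomes the purely integral equation $\alpha^I m=e$ with $\alpha\geq 2$, $m\geq 1$ and $e\in\{1,2,3\}$, and the remaining task is to bound $I$. For $I\geq 2$ one has $\alpha^I m\geq\alpha^2\geq 4>3\geq e$, impossible. For $I\leq -2$, writing $m=e\alpha^{-I}\geq\alpha^2$ and combining with $m=2\alpha-P<2\alpha$ (using $P\geq 2>0$) gives $\alpha^2<2\alpha$, contradicting $\alpha\geq 2$. For $I=-1$ the equation reads $m=e\alpha$, hence $2\alpha-P=e\alpha$ and $P=(2-e)\alpha$; this forces $P\leq 0$ when $e\geq 2$, whereas $e=1$ gives $P=\alpha=m$, so $\beta=0$ and $Q=0$, which is excluded. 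Thus only $I\in\{0,1\}$ survive.

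It then remains to enumerate these two cases subject to the standing hypotheses $-P-1\leq Q\leq P-1$ and $Q\neq 0$, using $Q=(P^2-m^2)/4$. For $I=0$ one has $m=e$, and solving $P^2-e^2=4Q$ under $Q\leq P-1$ confines $P$ to a short range for each $e\in\{1,2,3\}$, leaving (after discarding $Q=0$) exactly $(P,Q)=(3,2),(4,3),(5,4)$. For $I=1$ the equation $\alpha m=e$ with $\alpha\geq 2$ leaves only $(\alpha,m,e)=(2,1,2)$ and $(3,1,3)$; the first gives $(P,Q)=(3,2)$, while the second would give $(P,Q)=(5,6)$, which violates $Q\leq P-1$ and is discarded. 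These four surviving triples are precisely the exceptions listed. I expect the only genuinely delicate point to be this final bookkeeping: several a priori admissible triples $(\alpha,m,e)$ are eliminated solely by the constraint $-P-1\leq Q\leq P-1$ (together with $Q\neq 0$), so the enumeration must track these inequalities carefully rather than relying on the equation $\alpha^I m=e$ alone.
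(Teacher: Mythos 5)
Your proof is correct and reaches exactly the four listed exceptions, and its skeleton matches the paper's: both split on whether $\sqrt{D}$ is irrational (reducing to the impossibility of $V_I=0$, you via the decomposition $2\alpha^I=V_I+U_I\sqrt{D}$ and the paper via Galois conjugation $\beta^I=-d/(c\sqrt{D})$) or rational (reducing to an integer equation, then casing on $I$). Where you genuinely diverge is in the rational case: you first observe that $D=m^2$ forces $m\equiv P\pmod 2$, hence $\alpha=(P+m)/2$ is an integer $\ge 2$ and $m=2\alpha-P<2\alpha$, which turns the equation into $\alpha^I m=e$ and kills $I\ge 2$ by $\alpha^I m\ge 4>e$ and $I\le -2$ by $\alpha^2\le m<2\alpha$. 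The paper does not exploit integrality of $\alpha$; it instead bounds $((P+\delta)/2)^I\ge (3/2)^I$ for positive $I$ and finishes the leftover subcases by prime decomposition of $\delta(P+\delta)^I=e\cdot 2^I$, and for $I\le -2$ it runs a small descent ($\delta=e\delta_1$, $\delta_1=\gamma^J$) to reach $2\gamma=P+e\gamma^J$. Your version buys shorter, purely inequality-based eliminations of $|I|\ge 2$; the paper's version avoids the parity observation but pays with the factorization and descent arguments. The final bookkeeping for $I\in\{-1,0,1\}$ under $-P-1\le Q\le P-1$ and $Q\ne 0$ is essentially identical in both (your $I=0$ range $P\le e+2$ is the paper's condition $(P-2)^2\le\delta^2$), and your enumeration there is accurate, including the rejection of $(\alpha,m,e)=(3,1,3)$ via $Q=6>P-1$.
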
 
\begin{proof}
Suppose that $B_0=0$. We distinguish two cases.

Case 1: $\sqrt{D}$ is irrational.
\smallskip

Since $\alpha^I=d/(c\sqrt{D})$ holds for some $I\in\Z$, and $\beta$ is the conjugate of $\alpha$ we have $\beta^I=-d/(c\sqrt{D})$. Subsequently,
$$
V_I=\alpha^I+\beta^I=0.
$$
This is impossible because $V_0=2$, $V_1=P$, and the sequence $V_n$ is monotone increasing (we assumed $P\geq 2$).

Case 2: $\sqrt{D}$ is rational.
\smallskip

Assume that $\sqrt{D}=\sqrt{P^2-4Q}=\delta\in\Z^+$  (i.e.~$\delta\ge 1$.) Now $\alpha=(P+\delta)/2$, and we need to consider the equation
\begin{equation}\label{c2A}
\left(\frac{P+\delta}{2}\right)^I=\frac{e}{\delta},
\end{equation}
where $e=d/c\in\{1,2,3\}$ according to set $A$ appears on the first page of the paper. Now equation (\ref{c2A}) is considered for different values of $I$.

Suppose first that $I\ge1$. Since $P\ge2$, then
$$
\left(\frac{P+\delta}{2}\right)^I\ge\left(\frac{3}{2}\right)^I>\frac{e}{\delta} 
$$
follows if either $I\ge3$, or $I\ge2$ and $e\ne3$, or $I\ge1$ and $e\ne2,3$. The three remaining possibilities can be easily handled by considering the prime decomposition of the right hand side of the equivalent form 
$$
\delta\cdot(P+\delta)^I=e\cdot 2^I
$$
of (\ref{c2A}). Clearly, on the left hand side $\delta<P+\delta$, moreover $Q\ne0$, and $-P-1\leq Q\leq P-1$fulfil. Under these conditions only one solution exist to (\ref{c2A}), namely the second item of the theorem.

Now turn to the case $I=0$. It implies $\delta=e$. Then condition (\ref{sopron}) provides finitely many values for $P$. Thus one can check the existence of the nonzero integer $Q$ satisfying $P^2-4Q=\delta^2$. This verification gives the items 1, 3 and 4 of the theorem.

Assume $I=-1$. From (\ref{c2A}) we conclude $(2-e)\delta=eP$, and it leads to a contradiction with $e\in\{1,2,3\}$.

Finally, suppose $I\le-2$. Put $J=-I$. Obviously, $2\le J$ satisfies
$$
\delta\cdot2^J=e\cdot(P+\delta)^J
$$ 
for some $\delta$, $e$ and $P$. The left hand side is smaller than the right hand side if $\delta=1$. Hence $\delta\ge2$. In addition, $e\mid \delta$ ($e$ is prime or 1). Consequently, there exists a positive integer $\delta_1\ge2$ such that
$\delta=e\delta_1$. Thus we have
$\delta_1\cdot2^J=(P+e\delta_1)^J$.
It follows now that $\delta_1=\gamma^J$ for some positive integer $\gamma\ge2$. Then the contradiction is immediately implied by
$$
2\gamma=P+e\gamma^J.
$$
\end{proof}

\begin{thm}\label{B1}
Put again $e=d/c$. If $P\ge2$, then $B_1\ne0$ fulfils unless
\begin{itemize}
	\item $P\ge2,-P-1\le Q\le P-1$, $e=1$, $I=0$,
	\item $P=3,Q=2$, $\alpha=2$, $\sqrt{D}=1$, $e=2$, $I=1$,
	\item $P=4,Q=3$, $\alpha=3$, $\sqrt{D}=2$, $e=3$, $I=1$,
	\item $P=2,Q=-3$, $\alpha=3$, $\sqrt{D}=4$, $e=3$, $I=1$.
\end{itemize}
\end{thm}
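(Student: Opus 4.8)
The plan is to follow the same dichotomy as in the proof of Theorem~\ref{B0}, but with the target value being the rational number $e=d/c\in\{1,2,3\}$ rather than $e/\delta$, which shortens the analysis considerably. First I would record that $B_1=0$ is equivalent to the existence of some $I\in\Z$ with $\alpha^I=e$. Since $\alpha>1$ (established in the proof of Theorem~\ref{th111}), the choice $I=0$ solves this precisely when $e=1$, and it does so for \emph{every} admissible pair $(P,Q)$; this is exactly the first item. From here on I may therefore assume $e\in\{2,3\}$, which already forces $I\neq0$ because $\alpha^0=1\neq e$.

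Next I would split according to whether $\sqrt{D}$ is irrational or rational, exactly as in Theorem~\ref{B0}. In the irrational case $\alpha$ and $\beta$ are Galois conjugates over $\Q$, so $\alpha^I=e\in\Q$ would force $\beta^I=e$ as well upon applying the nontrivial automorphism; hence $(\alpha/\beta)^I=1$ with $I\neq0$, contradicting the standing assumption that $\alpha/\beta$ is not a root of unity (equivalently, $U_I=0$ forces $I=0$). So no solution arises here. In the rational case I set $\sqrt{D}=\delta\in\Z^+$; since $P^2-\delta^2=4Q$, the integers $P$ and $\delta$ share the same parity, so $\alpha=(P+\delta)/2$ is an integer, and $\alpha>1$ then gives $\alpha\ge2$.

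With $\alpha$ an integer $\ge2$ and $e\le3$, the equation $\alpha^I=e$ is immediately pinned down: $I\ge2$ gives $\alpha^I\ge4>e$, while $I\le-1$ gives $\alpha^I<1<e$, so necessarily $I=1$ and $\alpha=e$. The remainder is a finite enumeration: writing $\beta=P-e$, $Q=e(P-e)$ and $\delta=2e-P$, I would run over $P$ subject to $\delta\ge1$, $P\ge2$, $Q\neq0$ and $-P-1\le Q\le P-1$. For $e=2$ this leaves $P\in\{2,3\}$, of which only $P=3$ survives ($P=2$ gives $Q=0$), yielding the second item; for $e=3$ it leaves $P\in\{2,3,4,5\}$, of which $P=2$ and $P=4$ survive while $P=3$ gives $Q=0$ and $P=5$ violates $Q\le P-1$, yielding items~3 and~4.

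The only genuinely delicate point is the integrality of $\alpha$ in the rational case, since it is this fact that bounds $I$ and reduces everything to a finite check; I would make the parity argument $P\equiv\delta\pmod2$ explicit (or, equivalently, invoke that a rational $\alpha$ with $\alpha^I=e$ and $e$ prime forces $I=1$ and $\alpha=e$). Beyond that the main obstacle is purely bookkeeping: verifying that the candidate list is exhaustive and that each surviving triple $(P,Q)$ satisfies all admissibility constraints, while each discarded one fails exactly one of $Q\neq0$ or $-P-1\le Q\le P-1$.
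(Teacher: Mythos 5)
Your proposal is correct and follows essentially the same route as the paper's (very terse) proof: dispose of $e=1$ via $I=0$, show that $I\neq0$ forces $\alpha$ to be a rational integer (ruling out the irrational case by conjugation), and then reduce to the finite check $\alpha^I=e$ with the admissibility constraints on $(P,Q)$. Your write-up in fact supplies details the paper omits (the parity argument for integrality of $\alpha$ and the explicit enumeration), and the bookkeeping matches the four listed exceptional cases exactly.
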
 
\begin{proof}
The proof is similar, but simpler, to the proof of the previous theorem. Obviously $I=0$ is always suitable independently of $\alpha$ if $e=1$. Otherwise, if $I\ne0$, then $\alpha$ must be integer. It is sufficient to verify the equation $((P+\delta)/2)^I=1,2,3$ with the known conditions.
\end{proof}

\section{Applications}
For a given tuple $(a,b,c,d)$ and a sequence $R_n$, where $R_n$ is either $U_n$ or $V_n$, Theorem \ref{th111} provides an upper bound for $i,$ denote it by $\mathfrak{ub}_{R_n}(a,b,c,d).$ The strategy described in \cite{TenMR} in case of the Fibonacci sequence can be applied to determine the solutions of equation \eqref{MR} with $x=R_i,y=R_j,z=R_k.$ We note that in the theorem $1\leq i\leq j\leq k$ is assumed, hence to resolve the equation with e.g. $(a,b,c,d)=(1,1,2,2)$ one needs to handle the cases with $i\leq \mathfrak{ub}_{R_n}(1,1,2,2), i\leq \mathfrak{ub}_{R_n}(1,2,1,2)$ and $i\leq \mathfrak{ub}_{R_n}(2,1,1,2).$ Then after obtaining the solutions of \eqref{MR} with these cases we permute the components of these solutions in which they satisfy equation \eqref{MR} at the tuple $(a,b,c,d)=(1,1,2,2)$ in order to determine all of its solutions $(x,y,z)=(R_i, R_j, R_k).$

If we fix $(a,b,c,d)$, $i$ and $m=k-j,$ then  we need to study the equation
$$
aR_i^2+bR_j^2+c R_{j+m}^2-dR_i R_j R_{j+m}=0,
$$
where $R_n=U_n$ or $V_n.$
We note that the equation above only depends on $j.$ 
We adopt the arguments given in \cite{TenMR}.
\begin{enumerate}[label=(\Roman*)]
\item \label{MRargu1} We eliminate as many values of $i$ as possible by checking solvability of quadratic equations 
$$
aR_i^2+by^2+cz^2-dR_iyz=0.
$$
\item \label{MRargu2}  For fixed $m$ we eliminate equations $aR_i^2+bR_j^2+c R_{j+m}^2-dR_i R_j R_{j+m}=0$ modulo $p,$ where $p$ is a prime.
\item  \label{MRargu3} We can also eliminate equations $aR_i^2+bR_j^2+c R_{j+m}^2-dR_i R_j R_{j+m}=0$ using related identities of second order linear
recurrence sequences.
\item \label{MRargu4} We consider the equation $aR_i^2 + bR_j^2 + cR_{j+m}^2 = dR_i R_j R_{j+m} $ as a quadratic in $R_j.$ Its discriminant 
$d^2R_i^2R_{j+m}^2 -4b(aR_i^2 + cR_{j+m}^2 )$ must be a square. 
The sequences $U_n$ and $V_n$ satisfy  the fundamental identity 
$$
V_n^2-DU_n^2=4Q^n.
$$
Therefore in case of $Q=\pm 1$ we have the systems of equations
\begin{eqnarray*}
Y_1^2&=&DX^2\pm 4,\\
Y_2^2&=&d^2R_i^2X^2 -4b(aR_i^2 + cX^2 ),
\end{eqnarray*}
where $X=R_{j+m}=U_{j+m},$
and 
\begin{eqnarray*}
Y_1^2&=&DX^2\mp 4D,\\
Y_2^2&=&d^2R_i^2X^2 -4b(aR_i^2 + cX^2 ),
\end{eqnarray*}
where $X=R_{j+m}=V_{j+m}.$ Multiplying these equations together, in general, yields quartic genus 1 curves. One may determine the integral points on these curves using the Magma \cite{MAGMA} function  (based on results obtained by Tzanakis \cite{Tz1996}) \texttt{SIntegralLjunggrenPoints}. In specific cases it is only quadratic, if, in the first case $d^2U_i^2-4bc=0$, i.e. $d=1, U_i=2  (P=2)$. Similarly, for the sequence $V_i=2$ is possible.
\end{enumerate}

Let us apply the results of Theorem \ref{th111} with these arguments to determine the solutions of equation \eqref{MR} in some second order linear recurrence sequences.
\subsection{Balancing numbers and Markoff-Rosenberger equations}

The first definition of balancing numbers is essentially due to Finkelstein \cite{MR190079}, although he called them numerical centers. In 1999 Behera and Panda  \cite{BePa} defined balancing numbers as follows. A positive integer $n$ is called a balancing number if $$1 + 2 +\ldots+ (n-1) = (n + 1) + (n + 2) +\ldots+ (n + k)$$ for some $k\in\mathbb{N}.$ The sequence of balancing numbers is denoted by $B_n$ for $n\in\mathbb{N}.$ This sequence can be defined in a recursive way as well, we have that $B_0=0,B_1=1$ and 
$$
B_{n}=6B_{n-1}-B_{n-2},\quad n\geq 2.
$$
As we see this is the sequence $U_n(6,1).$ So $P=6, Q=1$ and $D=32.$ We also have that 
$$
\alpha=3+2\sqrt{2},\quad \beta=3-2\sqrt{2}.
$$
We have the bounds 
$$
\alpha^{n-1}\leq B_n\leq \alpha^n,
$$
that is $i_1=1,i_2=0,s_1=s_2=1.$
Since $Q=1$ the numbers $X=B_n$ satisfy the Pellian equation $Y^2=8X^2+1.$ We prove the following result.
\begin{thm}
If $(x,y,z)=(B_i,B_j,B_k)$  is a solution of the equation  
$$
ax^2+by^2+cz^2=dxyz
$$
and $(a,b,c,d)\in\{(1,1,1,1),(1,1,1,3),(1,1,2,2), (1,1,2,4), (1,1,5,5), (1,2,3,6)\},$ then there is at most one solution given by 
$
x=y=z=B_1=1.
$
\end{thm}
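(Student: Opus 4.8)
The plan is to apply Theorem~\ref{th111} to obtain, for each tuple $(a,b,c,d)$ in the list, an explicit upper bound $\mathfrak{ub}_{B_n}(a,b,c,d)$ on the smallest index $i$, and then to finish by the elimination strategy (I)--(IV) inherited from \cite{TenMR}. Since the balancing sequence is $U_n(6,1)$ with $P=6$, $Q=1$, $D=32$, $\alpha=3+2\sqrt2$, and the clean bounds $\alpha^{n-1}\le B_n\le\alpha^n$ give $s_1=s_2=1$, $i_1=1$, $i_2=0$, the first thing I would do is check that $B_0\ne0$ holds for every tuple, so that the theorem actually applies. Here $\sqrt D=4\sqrt2$ is irrational, so by Case~1 of the proof of Theorem~\ref{B0} we automatically have $B_0\ne0$ (indeed $V_I(6,1)=\alpha^I+\beta^I=0$ is impossible for the monotone sequence $V_n(6,1)$). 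With $B_0\ne0$ secured, inequality~\eqref{boundi} yields a concrete numerical bound for $i$ in each case.

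Next I would carry out the bookkeeping described after Theorem~\ref{th111}: because the theorem assumes the ordering $1\le i\le j\le k$ but the coefficients $(a,b,c)$ are not symmetric, each unordered tuple must be treated through all its relevant coefficient permutations, e.g. for $(1,1,2,2)$ one bounds $i$ using $\mathfrak{ub}_{B_n}(1,1,2,2)$, $\mathfrak{ub}_{B_n}(1,2,1,2)$, and $\mathfrak{ub}_{B_n}(2,1,1,2)$, and similarly for the other non-symmetric tuples. This reduces the problem to finitely many admissible values of the smallest index $i$.

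For each fixed $i$ below its bound I would then run the four-step elimination. Step~(I) discards values of $i$ for which $aB_i^2+by^2+cz^2-dB_iyz=0$ has no solution as a quadratic in the remaining variables. For the surviving $i$, the equation becomes, for each fixed $m=k-j$, a relation depending only on $j$; Step~(II) kills most of these by reducing modulo a well-chosen prime $p$ and checking that $B_n\bmod p$ is periodic, while Step~(III) uses standard balancing-number identities. The decisive tool is Step~(IV): viewing the equation as a quadratic in $B_j$, its discriminant must be a perfect square, and since $Q=1$ the balancing numbers satisfy the Pellian relation $Y^2=8X^2+1$; combining this with the discriminant condition produces a system whose integral points lie on a quartic genus~$1$ curve, to be resolved by the Magma routine \texttt{SIntegralLjunggrenPoints} based on \cite{Tz1996}. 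I expect this genus~$1$ curve analysis to be the main obstacle, as it requires the models to be elliptic and the $S$-integral point computations to terminate; the degenerate subcase where the quartic collapses to a conic does not arise here, since that demands $d=1$ with $B_i=2$, which is impossible because no balancing number equals $2$.

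Finally, assembling the integral points found across all cases, permutations, and residues of $i$, I would verify that the only triple $(B_i,B_j,B_k)$ actually satisfying one of the six equations is the trivial $x=y=z=B_1=1$ (solving $ax^2+by^2+cz^2=dxyz$ at $(1,1,1)$ precisely when $a+b+c=d$, which holds for each listed tuple), completing the proof.
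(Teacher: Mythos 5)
Your proposal follows essentially the same route as the paper: bound $i$ via Theorem~\ref{th111} (with $B_0\neq 0$ automatic since $\sqrt{D}=4\sqrt{2}$ is irrational, exactly as in Case~1 of Theorem~\ref{B0}), handle the coefficient permutations, eliminate most indices by argument (I), and finish by computing integral points on the genus-$1$ quartics obtained by combining the Pell relation $Y^2=8X^2+1$ with the discriminant condition, via \texttt{SIntegralLjunggrenPoints}. One small correction to your final sentence: $a+b+c=d$ holds only for $(1,1,1,3)$, $(1,1,2,4)$ and $(1,2,3,6)$, so $(1,1,1)$ is a solution only for those three tuples; for the other three the computation shows there are no solutions at all, which is why the theorem asserts only that there is \emph{at most} one solution.
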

\begin{proof}
In the proof we compute the bounds $C_0$ for the different tuples $(a,b,c,d).$ It turns out that in all the cases we have $i\leq 6.$ The argument given by (I) is very useful here, many values of $i$ can be eliminated by checking integral solutions of binary quadratic forms. Therefore we skip the congruence arguments given by (II) and (III). We directly consider the genus 1 curves obtained from the system of equations
\begin{eqnarray*}
Y_1^2&=&8X^2+1,\\
Y_2^2&=&d^2R_i^2X^2 -4b(aR_i^2 + cX^2 ).
\end{eqnarray*}
In the following table we provide details of the computations.
\small{
\begin{center}
\begin{tabular}{|c|c|c|c|c|}
\hline
$\left[a,b,c,d\right]$ & $B_0$& $C_0$ & $\left[i\right]$ & $\left[i, A'X^{4} +B'X^{2} +C', \left[X,Y\right]\right]$ \\
\hline
$\left[1, 1, 1, 1\right]$ & $0.086511$ & $5$ & $\left[2\right]$ & $\left[2, 256X^{4} - 1120X^{2} - 144, \left[\right]\right]$ \\
\hline
$\left[1, 1, 1, 3\right]$ & $0.22989$ & $4$ & $\left[1\right]$ & $\left[1, 40X^{4} - 27X^{2} - 4, \left[\left[1, -3\right], \left[-1, -3\right]\right]\right]$ \\
\hline
$\left[1, 1, 2, 2\right]$ & $0.086511$ & $4$ & $\left[2\right]$ & $\left[2, 1088X^{4} - 1016X^{2} - 144, \left[\right]\right]$ \\
\hline
$\left[1, 2, 1, 2\right]$ & $0.053111$ & $5$ & $\left[2\right]$ & $\left[2, 1088X^{4} - 2168X^{2} - 288, \left[\right]\right]$ \\
\hline
$\left[2, 1, 1, 2\right]$ & $0.053111$ & $5$ & $\left[\right]$ & $\left[\right]$ \\
\hline
$\left[1, 1, 2, 4\right]$ & $0.053111$ & $4$ & $\left[1\right]$ & $\left[1, 64X^{4} - 24X^{2} - 4, \left[\left[1, 6\right], \left[-1, 6\right]\right]\right]$ \\
\hline
$\left[1, 2, 1, 4\right]$ & $0.29289$ & $4$ & $\left[1\right]$ & $\left[1, 64X^{4} - 56X^{2} - 8, \left[\left[1, 0\right], \left[-1, 0\right]\right]\right]$ \\
\hline
$\left[2, 1, 1, 4\right]$ & $0.29289$ & $4$ & $\left[1\right]$ & $\left[1, 96X^{4} - 52X^{2} - 8, \left[\left[1, -6\right], \left[-1, -6\right]\right]\right]$ \\
\hline
$\left[1, 1, 5, 5\right]$ & $0.086511$ & $3$ & $\left[1\right]$ & $\left[1, 40X^{4} - 27X^{2} - 4, \left[\left[1, -3\right], \left[-1, -3\right]\right]\right]$ \\
\hline
$\left[1, 5, 1, 5\right]$ & $0.11612$ & $5$ & $\left[1\right]$ & $\left[1, 40X^{4} - 155X^{2} - 20, \left[\left[2, 0\right], \left[-2, 0\right]\right]\right]$ \\
\hline
$\left[5, 1, 1, 5\right]$ & $0.11612$ & $5$ & $\left[1\right]$ & $\left[1, 168X^{4} - 139X^{2} - 20, \left[\left[1, -3\right], \left[-1, -3\right]\right]\right]$ \\
\hline

\multirow{ 2}{*}{$\left[1, 2, 3, 6\right]$} & \multirow{ 2}{*}{$0.053111$} & \multirow{ 2}{*}{$4$} & \multirow{ 2}{*}{$\left[1, 3\right]$} & $\left[1, 96X^{4} - 52X^{2} - 8, \left[\left[1, -6\right], \left[-1, -6\right]\right]\right]$\\  
& & & & $\left[3, 352608X^{4} - 34324X^{2} - 9800, \left[\right]\right]$ \\
\hline

\multirow{ 2}{*}{$\left[1, 3, 2, 6\right]$} & \multirow{ 2}{*}{$0.22989$} & \multirow{ 2}{*}{$4$} & \multirow{ 2}{*}{$\left[1, 3\right]$} & $\left[1, 96X^{4} - 84X^{2} - 12, \left[\left[1, 0\right], \left[-1, 0\right]\right]\right]$\\ 
 & & & & $\left[3, 352608X^{4} - 73524X^{2} - 14700, \left[\right]\right]$ \\
\hline
$\left[2, 1, 3, 6\right]$ & $0.053111$ & $4$ & $\left[1\right]$ & $\left[1, 192X^{4} - 40X^{2} - 8, \left[\left[1, 12\right], \left[-1, 12\right]\right]\right]$ \\
\hline
$\left[2, 3, 1, 6\right]$ & $0.060659$ & $6$ & $\left[1\right]$ & $\left[1, 192X^{4} - 168X^{2} - 24, \left[\left[1, 0\right], \left[-1, 0\right]\right]\right]$ \\
\hline
$\left[3, 1, 2, 6\right]$ & $0.22989$ & $4$ & $\left[1\right]$ & $\left[1, 224X^{4} - 68X^{2} - 12, \left[\left[1, -12\right], \left[-1, -12\right]\right]\right]$ \\
\hline
$\left[3, 2, 1, 6\right]$ & $0.060659$ & $6$ & $\left[1\right]$ & $\left[1, 224X^{4} - 164X^{2} - 24, \left[\left[1, 6\right], \left[-1, 6\right]\right]\right]$ \\
\hline
\end{tabular}
\end{center}
}
The first column gives the tuple $(a,b,c,d),$ in the second column we see the bounds $B_0,$ in the third column we have the bounds $C_0$ for $i,$ in the fourth column we provide the lists containing the remaining values of $i$ not eliminated by the argument (I), in the last column we have the list containing $i,$ the right hand side of the quartic polynomial $Y^2=A'X^{4} +B'X^{2} +C'$ defining the genus 1 curve, and the integral solutions (the second coordinate is only up to sign, for us, only the first coordinate is interesting, since that gives $B_{j+m}$). For example in case of $(a,b,c,d)=(1,2,3,6)$ we have that the bound $B_0=0.053111$ and $C_0=4.$ That is $i\leq 4.$ Applying argument (I) we can eliminate $i=2,4,$ hence the list of remaining values is given by $[1,3].$ If $i=1,$ then we get that the only integral solutions are the ones with $X=\pm 1.$ Since $X=B_{j+m}$ the only possibility is $B_{j+m}=1.$ The last step is the solution of the quadratic equation
$$
1^2+2\cdot B_j^2+3\cdot 1^2=6\cdot 1\cdot B_j\cdot 1.
$$
It follows that $B_j$ is either 1 or 2, but 2 is not a balancing number. Therefore the only solution in this case 
$$
(B_i,B_j,B_k)=(B_1,B_1,B_1)=(1,1,1).
$$
\end{proof}

\subsection{Jacobsthal numbers and Markoff-Rosenberger equations} \label{Jacobexam}
If $(P,Q)=(1,-2),$ then we deal with the so-called Jacobsthal numbers $J_n=U_n(1,-2).$ That is we have $J_0=0,J_1=1$ and
$$
J_n=J_{n-1}+2J_{n-2}\quad\mbox{ if }n\geq 2.
$$
We obtain that $$D=9, \quad \alpha=2, \quad \beta=-1.$$ Therefore the closed-form of $J_n$ is given by
$$
\frac{2^n-(-1)^n}{3}.
$$
Based on the above closed-form equation we may provide bounds for $J_n,$ these are as follows
$$
\frac{2^{n-1}}{3}\leq J_n\leq 2^{n-1}, \quad n\geq 1.
$$
We get that $i_1=1, i_2=-1, s_1=1/3$ and $s_2=1.$ We prove the following statement.
\begin{thm}
If $(x,y,z)=(J_i,J_j,J_k)$  is a solution of equation  
\begin{equation}\label{JacMR}
aJ_i^2+bJ_j^2+cJ_k^2=dJ_iJ_jJ_k
\end{equation}
and $(a,b,c,d)\in\{(1,1,1,1), (1,1,1,3), (1,1,2,2), (1,1,2,4), (1,1,5,5), (1,2,3,6)\},$ then the complete list of solutions are given by
\begin{center}
\begin{tabular}{|c|c|}
	\hline 
	$(a,b,c,d)$	& solutions \\ 
	\hline 
	$(1,1,1,1)$	& $\{(3,3,3)\}$ \\ 
	\hline 
	$(1,1,1,3)$ & $\{(1,1,1)\}$ \\
	\hline
	$(1,1,2,2)$	& $\{ \}$ \\ 
	\hline 
	$(1,1,2,4)$	& $\{(1,1,1),(1,3,1),(1,3,5),(3,1,1),(3,1,5),(3,11,1),(11,3,1)\}$ \\
	\hline  
   $(1,1,5,5)$	& $\{(1,3,1),(3,1,1)\}$ \\ 
	\hline 	
	$(1,2,3,6)$	& $\{(1,1,1),(5,1,1)\}$ \\ 
	\hline 
\end{tabular}. 	
\end{center}
\end{thm}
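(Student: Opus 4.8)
The plan is to run the same four-step strategy used for the balancing numbers, adapted to two features special to $(P,Q)=(1,-2)$: the value $P=1$ lies outside the range $P\ge 2$ of Theorem~\ref{th111}, and $Q=-2$ is neither $+1$ nor $-1$. First I would note that the proof of Theorem~\ref{th111} uses only $\alpha>1$ and $|\beta|\le 1$, both of which hold here since $\alpha=2$ and $\beta=-1$; together with the bounds $2^{n-1}/3\le J_n\le 2^{n-1}$ (giving $s_1=1/3$, $s_2=1$, $i_1=1$, $i_2=-1$) this makes the derivation of \eqref{ijk}--\eqref{boundi} valid verbatim. Hence, for each tuple $(a,b,c,d)$ and each permutation of $(a,b,c)$ needed to reduce to $1\le i\le j\le k$, I would compute $B_0=\min_{I\in\Z}|2^{I}-e/3|$ with $e=d/c$ and, whenever $B_0\ne 0$, read off an explicit bound $C_0$ for $i$ from \eqref{boundi}.

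The first genuine obstacle is that $B_0=0$ for precisely the permutations with $e\in\{3,6\}$, which occur inside the tuples $(1,1,1,3)$ and $(1,2,3,6)$, since then $e/3=2^{I_0}$ with $I_0\in\{0,1\}$. For these I would return to \eqref{ijk}: cancellation of the main term forces a dichotomy. Either $k-i-j\ne I_0$, in which case the left-hand side is bounded below by the fixed positive number $2^{I_0-1}$ and $i$ is again bounded; or $k=i+j+I_0$ exactly. In the second case I would substitute this linear relation together with the closed form $J_n=(2^n-(-1)^n)/3$ into the equation, obtaining, after clearing denominators, an exponential Diophantine equation in $2^{i}$ and $2^{j}$. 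Here the top-order powers of $2$ cancel (the relation $k=i+j+I_0$ places the triple near a Markoff-type identity), so the equation is really a constraint on the lower-order terms, from which the finitely many solutions can be extracted directly. I expect this $B_0=0$ analysis to be the main difficulty.

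With $i$ now confined to an explicit finite set in every case, I would apply argument~(I), discarding each $i$ for which the binary quadratic form $aJ_i^2+by^2+cz^2-dJ_iyz$ represents no pair of Jacobsthal numbers $(y,z)=(J_j,J_k)$. As in the balancing computation this removes most of the candidate indices, leaving only a handful of $i$ to examine per tuple.

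For each surviving $i$ I would view the equation as a quadratic in $J_j$, so that its discriminant condition reads $Y^2=(d^2J_i^2-4bc)X^2-4abJ_i^2$ with $X=J_k$. The crucial departure from the balancing case is that, since $\beta=-1$, the companion sequence satisfies $V_n=3J_n+2(-1)^n$ and the fundamental identity degenerates to $V_n^2-9J_n^2=4(-2)^n$, whose right-hand side depends on $n$; thus there is no fixed Pell equation and argument~(IV) cannot be applied directly. Instead I would split according to the parity of $k$ and use $3J_k+1=2^{k}$ for $k$ even, respectively $3J_k-1=2^{k}$ for $k$ odd, to write $X=(t^2-1)/3$ with $t=2^{k/2}$, respectively $X=(2t^2+1)/3$ with $t=2^{(k-1)/2}$. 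Substituting these parametrizations into the discriminant condition yields, for each $i$, two genus~$1$ quartic curves in $t$; I would compute their integral points with \texttt{SIntegralLjunggrenPoints}, keep only the points for which $t$ is a power of $2$ (so that $X$ is genuinely a Jacobsthal number), recover $J_k$, solve the quadratic for $J_j$, and check that its root is again a Jacobsthal number. Assembling the triples found for all $i$ and all permutations, and permuting their coordinates back to the original ordering of $(a,b,c,d)$, should reproduce exactly the list in the statement.
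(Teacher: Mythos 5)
Your proposal is correct in outline and shares the paper's skeleton --- re-derive the bound \eqref{boundi} for $(P,Q)=(1,-2)$ using only $\alpha=2$, $|\beta|=1$, isolate the degenerate permutations (exactly those with $e=d/c\in\{3,6\}$, i.e.\ $e/3=2^{I_0}$ with $I_0\in\{0,1\}$), split those into $k-i-j\ne I_0$ versus $k=i+j+I_0$, and prune with argument (I) --- but it diverges from the paper in two places. First, for the surviving indices the paper does \emph{not} resurrect argument (IV): it bounds $m=k-j$ from \eqref{ijk} for each fixed $i$, then combines congruences modulo small primes (argument (II)) with repeated use of the identity $J_{n+1}=2J_n+(-1)^n$ (argument (III)) to rewrite $J_{j+m}$ in terms of $J_j$ and reduce each case to an explicit quadratic in $J_j$ (with a sign depending on the parity of $j$). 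Your alternative --- viewing the equation as a quadratic in $J_j$, imposing the discriminant condition $Y^2=(d^2J_i^2-4bc)X^2-4abJ_i^2$, and encoding ``$X$ is a Jacobsthal number'' through $X=(t^2-1)/3$ or $(2t^2+1)/3$ with $t$ a power of $2$, then finding integral points on the resulting quartics --- is a genuinely different and workable route; it avoids having to bound $k-j$ at all, at the cost of relying on \texttt{SIntegralLjunggrenPoints} and of needing to rule out the degenerate quartics (e.g.\ when $d^2J_i^2-4bc$ is a square or zero), which you should address as the paper does for the balancing case. Second, your treatment of the exceptional branch $k=i+j+I_0$ is the one under-specified step: the cancellation of the leading term $2^{2(i+j+I_0)}$ is correct, but ``the finitely many solutions can be extracted directly'' hides real work --- the paper needs a four-way parity analysis modulo $8$ of the resulting exponential equation to force $i=1$, followed by an explicit negativity estimate $2+3J_j^2+J_{j+2}^2-6J_jJ_{j+2}=2-2J_{j-1}^2-24J_{j-1}J_{j-2}-24J_{j-2}^2<0$ for $j\ge3$ to finish. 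Your size-comparison idea (the surviving dominant term of order $2^{i+2j}$ cannot be balanced for large $j$) would serve the same purpose, but as written it is a plan rather than a proof, and you should also note (as the paper does) that your lower bound $\min_{I\ne I_0}|2^I-2^{I_0}|=2^{I_0-1}$ must allow negative $I$, since $k-i-j$ can be negative.
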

\begin{proof}
Since $\beta=-1,$ we may follow the steps of the proof of Theorem 1. We have that $\sqrt{D}=3,$ so in some cases we obtain $B_0=0.$

{\bf The case $(a,b,c,d)=(1,1,1,1).$} Here we obtain that $B_0\approx 1.666$ and the bound for $i$ is 5. Applying the argument given at (I) it turns out that all values can be eliminated except $i=3.$ If $i=3,$ then we compute the possible values of $k-j$ from inequality \eqref{ijk}. We have that $k-j\in\{0,1,2,3\}.$ If $k-j\in\{1,2\},$ then applying (II) with $p=3$ works and in case of $k-j=3$ we use $p=11$ to show that there is no solution. The remaining case is related to $k-j=0.$ We obtain the equation
$$
3^2+J_j^2+J_j^2=3J_jJ_j.
$$
It follows that $J_j=J_k=3,$ so the solution is given by $(J_i,J_j,J_k)=(3,3,3).$

{\bf The case $(a,b,c,d)=(1,1,1,3).$} In this case in \eqref{ijk} we have $|2^{k-i-j}-1|$ and this expression is 0 if $k-i-j=0.$ Therefore we need to study the equation
\begin{multline*}
(2^i-(-1)^i)^2+(2^j-(-1)^j)^2+(2^{i+j}-(-1)^{i+j})^2=\\(2^i-(-1)^i)\cdot (2^j-(-1)^j)\cdot (2^{i+j}-(-1)^{i+j}).
\end{multline*}
By symmetry we may assume that $i\leq j.$ The small solutions with $0\leq i\leq j\leq 2$ can be enumerated easily. Since we consider solutions with $i,j>0$ we omit $(i,j)=(0,0).$ The other solution is given be $(i,j)=(1,1),$ hence we get that $(J_i,J_j,J_k)=(1,1,1).$ If $i=2,$ then it follows modulo 7 that there is no solution. If $i>2,$ then we work modulo 8 to show that no solution exists. If $k-i-j\neq 0,$ then we obtain that 
 $$B_0=\min_{I\in\mathbb{Z}\setminus\{0\}}\left|\alpha^I-\frac{d}{c\sqrt{D}}\right|=1.$$
As a consequence we have that $i\in\{1,2\}.$ We may exclude the cases $i=1,2$ and $k-j=2$ modulo 5. In a similar way working modulo 7 we eliminate the cases $i=2,k-j=3$ and $i=3,k-j=3,4.$ The remaining cases are given by $i\in\{1,2\},k-j\in\{0,1\}.$
If $i=1,2, k-j=0,$ then it easily follows that $(1,1,1)$ is the only solution. If $i=1,2,k-j=1,$ then the equation is 
$$
1+J_j^2+J_{j+1}^2=3J_jJ_{j+1}.
$$
Since $J_{j+1}=2J_j+(-1)^j,$ the above equation can be written as
$$
J_j^2-(-1)^jJ_j-2=0.
$$
Thus the only possibilities are given by $J_j\in\{\pm 1,\pm 2\}.$ Again the only solution we get is $(1,1,1).$

{\bf The case $(a,b,c,d)=(1,1,2,2).$} Here we compute the bounds for $i$ in the cases $(a,b,c,d)=(1,1,2,2),(1,2,1,2),(2,1,1,2).$ Simply argument (I) is enough to show that there exists no solution.

{\bf The case $(a,b,c,d)=(1,1,2,4).$} The bound for $i$ is 4 and by (I) we can eliminate the case $i=4$ when the order of the coefficients is $(1,1,2,4).$ Congruence arguments (modulo 3 or 7) work if $(i,k-j)\in\{(1,2),(1,3),(2,2),(2,3)\}.$ The remaining cases are
$$
(i,k-j)\in\{(1,0),(1,1),(2,0),(2,1),(3,0),(3,1),(3,2),(3,3)\}.
$$ 
From $(i,k-j)=(1,0),(2,0),(3,0)$ we obtain the solutions (by solving quadratic equations) $(1,1,1)$ and $(3,1,1).$ If $(i,k-j)=(1,1),(2,1),$ then we get $J_j^2+4(-1)^jJ_j+3=0.$ Hence $J_j=1$ or 3. So we obtain the solutions $(1,1,1),(1,3,1),(1,3,5).$
In case of $(i,k-j)=(3,1)$ we obtain $15J_j^2+4(-1)^jJ_j-11=0.$ Thus we have the solution $(3,1,1).$ By applying the rule $J_{n+1}=2J_{n}+(-1)^n$ two or three times we can reduce the problems $(i,k-j)=(3,2),(3,3)$ to quadratic equations. The formulas are getting more involved, for example if $(i,k-j)=(3,2)$ we have
$$
9+J_j^2+2(4J_j+2(-1)^j+(-1)^{j+1})^2=12J_j(4J_j+2(-1)^j+(-1)^{j+1}).
$$
In this case we get that $J_j=1.$ In a very similar way we handle the cases with the tuples $(1,2,1,4)$ and $(2,1,1,4).$

{\bf The case $(a,b,c,d)=(1,1,5,5).$} Here we need to deal with the tuples $(1,1,5,5),(1,5,1,5)$ and $(5,1,1,5).$ The bounds for $i$ are given by 3, 6 and 6, respectively. Since the steps are similar as we have applied in the previous cases, we omit the details.

 {\bf The case $(a,b,c,d)=(1,2,3,6).$} We only provide some data related to the computation. Let us start with the bounds:
 \begin{center}
 \begin{tabular}{|c|c|c|}
 \hline 
 tuple & bound for $i$ & special case \\ 
 \hline 
 $(1,2,3,6)$ &4 & - \\ 
 \hline 
 $(1,3,2,6)$ & 2 & $k-i-j=0$ \\ 
 \hline 
 $(2,1,3,6)$ & 4 & - \\ 
 \hline 
 $(2,3,1,6)$ & 2 & $k-i-j=1$ \\ 
 \hline 
 $(3,1,2,6)$ & 2 & $k-i-j=0$ \\ 
 \hline 
 $(3,2,1,6)$ & 2 & $k-i-j=1$ \\ 
 \hline 
 \end{tabular}.
 \end{center}
As before we apply the arguments given by (I) and (II) and the identity $J_{n+1}=2J_n+(-1)^n$ to resolve all the possible cases. The only new case that has not appeared yet is $k-i-j=1.$ If we take the tuple $(2,3,1,6),$ then we obtain
\begin{equation}\label{eq1}
2J_i^2+3J_j^2+J_{i+j+1}^2-6J_iJ_jJ_{i+j+1}=0,
\end{equation}
or
\begin{multline}\label{maincase}
2(2^i-(-1)^i)^2+3(2^j-(-1)^j)^2+(2^{i+j+1}-(-1)^{i+j+1})^2=\\2(2^i-(-1)^i)\cdot (2^j-(-1)^j)\cdot (2^{i+j+1}-(-1)^{i+j+1}).
\end{multline}

With respect to the values of $i$ and $j$ we consider the following cases (with assuming that $1 \leq i \leq j \leq k$):
\begin{itemize}
	\item If  $i$ and $j$ are both even, i.e. $i=2t$ and $j=2r$ for all positive integers $t, r \geq 1$ then equation \eqref{maincase} becomes 
\begin{multline*}
E_1=2(4^t-1)^2+3(4^r-1)^2+(2\cdot 4^{t+r}+1)^2-\\2(4^t-1)\cdot(4^r-1)\cdot(2\cdot4^{t+r}+1)=0.
\end{multline*}	
But, $E_1 \pmod{8} \equiv 4$ for all  $t, r  \geq 1$, which leads to a contradiction. Moreover, since $i$ and $j$ are both even with $i \geq 2$ and $j \geq 2$ then all the even values of $i$ and $j$ are excluded.
	\item If  $i$ and $j$ are both odd, i.e. $i=2t+1$ and $j=2r+1$ for all positive integers $t, r \geq 1$ then equation \eqref{maincase} implies to
\begin{multline*}
E_2=2(2\cdot4^t+1)^2+3(2\cdot4^r+1)^2+(2\cdot4^{t+r+1}+1)^2-\\2(2\cdot4^t+1)(2\cdot4^r+1)(2\cdot4^{t+r+1}+1)=0.
\end{multline*}
Similarly, $E_2 \pmod{8} \equiv 4$ for all  $t, r  \geq 1$, and again we get a contradiction. Indeed, all the odd values of $i \geq 3$ and $j \geq 3$ are excluded, and it remains only to check whether equation \eqref{eq1} has solutions or not at the following cases: $i=1, j=1; i=1, j\geq 3; j=1, i\geq 3.$ In fact, since we assumed that $1 \leq i \leq j \leq k$ then the later case can be covered by checking the solvability of equation \eqref{eq1} at $i=j=1.$
\item If  $i$ is even and $j$ is odd, i.e. $i=2t$ and $j=2r+1$ for all positive integers $t, r \geq 1$ then equation \eqref{maincase} leads to 
\begin{multline*}
E_3=2(4^t-1)^2+3(2\cdot4^r+1)^2+(4^{t+r+1}-1)^2-\\ 2(4^t-1)(2\cdot4^r+1)(4^{t+r+1}-1)=0.
\end{multline*}
Again, we get a contradiction since $E_3 \pmod{8} \equiv 4$ for all $t, r  \geq 1$. Here, we excluded all the even values of $i \geq 2$ and odd values of $j \geq 3$, and it remains to check whether equation \eqref{eq1} has solutions or not only at $j=1, i\geq 2$. Similarly, this can be covered by studying the solutions of equation \eqref{eq1} only at $i=j=1.$ 

\item Finally, if $i$ is odd and $j$ is even, i.e. $i=2t+1$ and $j=2r$ for all positive integers $t, r  \geq 1$ then similarly we have 
\begin{multline*}
E_4=2(2\cdot4^t+1)^2+3(4^r-1)^2+(4^{t+r+1}-1)^2-\\ 2(2\cdot4^t+1)\cdot(4^r-1)\cdot(4^{t+r+1}-1)=0,
\end{multline*}
and $E_4 \pmod{8} \equiv 4$ for all $t, r \geq 1$, which gives a contradiction. It is clear that all the odd values of $i \geq 3$ and even values of $j \geq 2$  are excluded, and we need to check whether equation \eqref{eq1} has solutions or not only at $i=1, j\geq 2$. 
\end{itemize}
From these cases we conclude that it only remains to study the solutions of equation \eqref{eq1} at $i=1$ and all the integers of $j$ with $j\geq 1.$ This can be done by direct substitution and using argument (III) as follows.\\
 
It is clear that we have $k=i+j+1=j+2$ and 
\begin{equation}\label{111}
2+3J_j^2+J_{j+2}^2-6J_jJ_{j+2}=0 \quad \mbox{for} \quad  j \geq 1.
\end{equation}
\begin{itemize}
\item If $j=1$ then we have that $-4=2+3J_1^2+J_{3}^2-6J_1J_{3}=0$, which is impossible.
\item If $j=2$ then  we get the solution $(i,j,k)=(1,2,4)$. Hence, equation \eqref{eq1} has the solution $(J_i, J_j, J_{i+j+1})=(J_i, J_j, J_{k})=(J_1, J_2, J_4)=(1,1,5).$
\item If $j \geq 3$, we can show that equation \eqref{eq1} has no more solutions by showing that

$$
2+3J_j^2+J_{j+2}^2-6J_jJ_{j+2}<0 \quad \mbox{for} \quad j \geq 3.
$$
Indeed, after substituting the Jacobsthal numbers formula in the left hand side of equation \eqref{111} a few times we get that
\begin{equation*}
\begin{split}
2+3J_j^2+J_{j+2}^2-6J_jJ_{j+2} &=2-2J_{j-1}^2-24J_{j-1}J_{j-2}-24J_{j-2}^2\\
&<0 \quad \mbox{for}\quad  j \geq 3,
\end{split}
\end{equation*}
and this contradicts equation \eqref{111}.
\end{itemize}
Therefore, by permuting the components of the solution $(1,1,5)$ to be a solution of equation \eqref{JacMR} at the tuple $(1,2,3,6)$ we get the solution $(5,1,1)$.
\end{proof}

\begin{acknowledgement}
The research was supported in part by grants ANN130909, K115479 and K128088 of the Hungarian National Foundation for Scientific Research (Sz.~T.).  For L.~Sz.~the research was supported by Hungarian National Foundation for Scientific Research Grant No.~128088. This presentation has been made also in the frame of the ``{\sc Efop}-3.6.1-16-2016-00018 -- Improving the role of the research $+$ development $+$ innovation in the higher education through institutional developments assisting intelligent specialization in Sopron and Szombathely''. The work of H. R. Hashim was supported by the Stipendium Hungaricum Scholarship.
\end{acknowledgement}
\bibliography{allbibMR}
\end{document}